\newtheorem{theorem}{Theorem}[section]
\newtheorem{proposition}{Proposition}[section]
\theoremstyle{definition}
\theoremstyle{remark}
\numberwithin{equation}{section}
\renewcommand{\a}{\alpha}
\newcommand{\ep}{\epsilon}
\newcommand{\vep}{\varepsilon}
\newcommand{\Z}{\mathbb Z}
\def\Za{\mathbb Z+1/2}
\def\Zp{\mathbb Z_++1/2}
\def\a{\alpha}
\begin{document}

\title[Fermionic realization of toroidal Lie algebras]
{Fermionic realization of toroidal Lie algebras of classical types}
\author{Naihuan Jing and Kailash C. Misra}
\address{Department of Mathematics,
   North Carolina State Univer\-sity,
   Ra\-leigh, NC 27695-8205, USA}
\email{jing@math.ncsu.edu, misra@math.ncsu.edu}
\thanks{Jing acknowledges the support of NSA grant H98230-06-1-0083 and NSFC grant 10728102,
and Misra acknowledges the support of NSA grant H98230-06-1-0025.}
\keywords{Clifford algebras, vertex operators}
\subjclass{Primary: 17B, 20}

\begin{abstract}
We use fermionic operators to construct toroidal Lie algebras of
classical types, including in particular that of symplectic affine
algebras, which is first realized by fermions.
\end{abstract}

\maketitle

\section{Introduction} \label{S:intro}

Toroidal Lie algebras are natural generalization of the affine
Kac-Moody algebras \cite{MRY} that enjoy many similar interesting
features. Let $\mathfrak{g}$ be a finite-dimensional complex
simple Lie algebra of type $X_n$, and $R=\mathbb{C}[s,s^{-1},t,t^{-1}]$ be the
ring of Laurent polynomials in commuting
variables $s$ and $t$. By definition a 2-toroidal Lie algebra of type $X_n$ is a perfect central extension of the iterated loop algebra
$\mathfrak{g}\otimes R$.

Let $\Omega_R/dR$ be the K\"ahler differentials of $R$ modulo the
exact forms. The universal central extension of the iterated loop
algebra is given by $T(X_n)=(g\otimes R)\oplus
\Omega_R/dR$. Any 2-toroidal Lie algebra of type $X_n$ is a homomorphic image of
this toroidal Lie algebra. The center of $T(X_n)$ is $\Omega_R/dR$,
which is a infinite-dimensional vector space. The Laurent
polynomial ring $R$ induces a natural $\mathbb{Z}^2-$gradation on
$T(X_n)$. The center is given by
$\Omega_R/dR=\oplus_{\sigma\in
\mathbb{Z}^2}{\mathcal{Z}}(g)_{\sigma}$, with
dim${\mathcal{Z}}_{\sigma}=1$ if $\sigma\not= (0,0)$, and 2 if
$\sigma=(0,0)$. We denote by $c_0$ and $c_1$ the two standard
degree zero central elements in the toroidal Lie algebra
$T(X_n)$. A module of $T(X_n)$ is called a
level-$(k_0,k_1)$ module if the standard pair of central elements $(c_0,c_1)$ acts as $(k_0,k_1)$ for some complex numbers $k_0$ and $k_1$. In this
work we will only consider modules with $k_0\neq 0$.

Representation theory of toroidal Lie algebras have been studied
extensively in recent years (for example see: \cite{MRY} \cite{BB},
\cite{EM}, \cite{FM}, \cite{T1}, \cite{T2}, \cite{JMT}, \cite{XH}). Some
exceptional types were studied in \cite{LH}.
Most of these
realizations are bosonic. In \cite{FF} bosonic and fermionic
constructions for the classical affine Lie algebras are given.
Motivated by \cite{FF}, Gao \cite{G} gave both bosonic and fermionic
constructions of the extended affine general linear algebras. In
\cite{L}, Lau gave more general bosonic and fermionic constructions
which included as special cases the constructions in \cite{FF} and
\cite{G} for the affine and extended affine general linear algebras
as well as Virasoro algebra and $W$-algebra.

In this paper we extend the fermionic construction of
Feingold-Frenkel \cite{FF} to give a unified fermionic construction
for all 2-toroidal Lie algebras of classical types. In particular,
it includes the special case of $q=1$ in \cite{G}. Our main idea is
to construct the operators corresponding to special nodes in the
affine Dynkin diagrams. It turns out that the idea of ghost fields
plays an important role in our construction. By introducing special
auxiliary fields we are able to define actions for the root vectors
corresponding to the special nodes.

Another new feature of our construction is that we have succeeded in
realizing the type $C$ toroidal Lie algebras exclusively using
fermions, which contains the affine symplectic Lie algebras as a
special case. In the original Feingold-Frenkel construction the type
$C$ case was not available in fermionic construction, and it has
been an open problem for nearly 25 years. This is because one needs
to use quadratic expressions to represent all root vectors in the
algebra. If $b(z)$ is a fermionic field, then $:b(z)b(z):=0$, thus
it is impossible to directly use fermionic operators to realize the
special node $\alpha_n=2\vep_n$ in type $C$ toroidal Lie algebras,
and we have found new ways to realize the symplectic affine Lie
algebras by introducing new fermions. In a subsequent paper we will
generalize this idea to the bosonic picture \cite{JMX}.

The structure of this paper is as follows. In section 2 we define
the toroidal Lie algebra, and state MRY-presentation \cite{MRY} of
the toroidal algebra in terms of generators and relations. In
section 3 we start with a finite rank lattice with a symmetric
bilinear form and define a Fock space and some vertex operators,
which in turn give level (1,0) representations of the toroidal Lie
algebra of type $A_n, B_n$, $C_n$ or  $D_n$. The proof is an
extensive analysis of the operator product expansions for the field
operators. We also include the verification of the Serre relations.

\section{Toroidal Lie Algebras}
Let $\mathfrak{g}$ be the complex simple Lie algebra over
$\mathbb{C}$ of type $X_n$, and $R=\mathbb{C}[s,s^{-1},t,t^{-1}]$ be the ring of
Laurent polynomials in commuting variables. We consider the
iterated loop algebra $\mathfrak{g}(R)=\mathfrak{g}\otimes R$. A
toroidal Lie algebra of type $X_n$ is a perfect central extension
of the iterated loop algebra $\mathfrak{g}(R)$. Let $\Omega_R$ be the
$R$-module of differentials with differential mapping $d:$ $R\to
\Omega_R$, such that $d(g_1g_2)=(dg_1)g_2+g_1(dg_2)$ for all
$g_1,g_2$ in $R$. Let $-:$ $\Omega_R\to\Omega_R/dR$ be the
canonical linear map for which ${\overline{dg}}=0$ for all $g\in
R$. Define the vector space
$$
T(X_n):=(\mathfrak{g}\otimes R)\oplus \Omega_R/dR
$$
with the following bracket operation defined by
$$
[x\otimes g_1, y\otimes g_2]=[x,y]\otimes g_1g_2+(x,y){\overline{g_2dg_1}},
$$
and $\Omega_R/dR$ central, for $x,y\in \mathfrak{g}$, $g_1,g_2\in
R$, where $(\cdot,\cdot)$ is the trace form. From \cite{MRY} we
know that $T(X_n)$ is a perfect Lie algebra and is the universal
central extension of the iterated loop algebra
$\mathfrak{g}(R)$. Therefore, any toroidal Lie algebra of
type $X_n$ is a homomorphic image of $T(X_n)$. The gradation of
the polynomial ring $R$ gives a natural $\mathbb{Z}^2$-gradation
to the toroidal Lie algebra $T(X_n):=\oplus_{\sigma\in
\mathbb{Z}^2}T(X_n)_{\sigma}$, where $T(X_n)_{\sigma}$ is spanned
by $x\otimes s^{m_0}t^{m_1}$, ${\overline
{s^{m_0}t^{m_1}s^{-1}ds}}$ and ${\overline
{s^{m_0}t^{m_1}t^{-1}dt}}$ for $\sigma=(m_0,m_1)\in \mathbb{Z}^2$
and $x\in \mathfrak{g}$. The condition ${\overline {dg}}=0$ for
all $g\in R$ implies that $m_0{\overline
{s^{m_0}t^{m_1}s^{-1}ds}}+ m_1{\overline
{s^{m_0}t^{m_1}t^{-1}dt}}=0$ for all $m_0,m_1\in \mathbb{Z}$.
Therefore dim$T(X_n)_{\sigma}=1+ \mbox{dim}(\mathfrak{g})$ if $\sigma\not=(0,0)$, and $2+ \mbox{dim}(\mathfrak{g})$ if
$\sigma=(0,0)$. In particular, $T(X_n)_{(0,0)}$ is spanned by
$x\otimes 1$ for $x\in \mathfrak{g}$, and central elements
${\overline{s^{-1}ds}}$, ${\overline {t^{-1}dt}}$. We denote these
two degree zero central elements by $c_0$ and $c_1$.

The most interesting quotient algebra of the toroidal Lie algebra
$T(X_n)$ is the double affine algebra, denoted by $T_0(X_n)$, that
is the toroidal Lie algebra of type $X_n$ with a two dimensional
center. The double affine algebra is the quotient of $T(X_n)$
modulo all the central elements with degree other than zero. In
fact, $T_0(X_n)$ has the following realization
$$
T_0(X_n)=(\mathfrak{g}\otimes R)\oplus \mathbb{C}c_0\oplus
\mathbb{C}c_1
$$
with the Lie product
$$
[x\otimes g_1, y\otimes g_2]=[x,y]\otimes g_1g_2+\Phi(g_2\partial_sg_1)c_0
+\Phi(g_2\partial_tg_1)c_1
$$
for all $x,y\in \mathfrak{g}$, $g_1,g_2\in R$, where $\Phi$ is a
linear functional on $R$  defined by $\Phi(s^kt^m)=0$
if $(k,m)\not= (0,0)$
and $\Phi(s^kt^m)=1$, if $(k,m)=(0,0)$
for all $k,m\in \mathbb{Z}$.

\vspace{3mm}

\noindent {\bf Definition 2.1} {\it If $M$ is a module for a
toroidal Lie algebra of type $X_n$, we call $M$ a
level-$(k_0,k_1)$ module for some complex numbers $k_0,k_1$ if the
degree zero central elements $c_0$, $c_1$ act on $M$ as constants
$k_0,k_1$ respectively.} \vspace{3mm}

In the present paper we want to give concrete construction of
level-$(1,0)$ module for the toroidal Lie algebra $T(X_n)$, and also
for the double affine algebra $T_0(X_n)$, for $X= A,B,C,D$.

Let $(a_{ij})_{{n+1}\times {n+1}}$ be the generalized Cartan matrix of the
affine algebra $X_n^{(1)}$, and $Q:=
\mathbb{Z}\alpha_0 \oplus \mathbb{Z}\alpha_1  \oplus \cdots \oplus \mathbb{Z}\alpha_n$ its root lattice. The
toroidal Lie algebra $T(X_n)$  has the following presentation
\cite{MRY} with generators $\not c$, $\alpha_i(k)$,  and
$x_k(\pm\alpha_i)$ for $i=0,1, \cdots , n$, $k\in \mathbb{Z}$, and the
following relations:
 \vspace{3mm}

   (R0). $[{\not c}, \alpha_i(k)]=0=[{\not c}, x_k(\pm\alpha_i)];$

   (R1). $[\alpha_i(k),\alpha_j(m)]=k(\a_i | \a_j)\delta_{k+m,0}{\not c};$

   (R2). $[\alpha_i(k), x_m(\pm\alpha_j)]=\pm (\a_i | \a_j) x_{k+m}(\pm\alpha_j);$

   (R3).   $[x_k(\alpha_i),x_m(-\alpha_j)]=-\delta_{ij}\frac{2}{(\a_i | \a_j)}\left\{\alpha_i(k+m)+k\delta_{
   k+m,0}{\not c}\right\};$

   (R4). $[x_k(\alpha_i), x_m(\alpha_i)]=0=[x_k(-\alpha_i), x_m(-\alpha_i)];$

   $\;\;\;\;\;\;\left(ad x_0(\alpha_i)\right)^{-a_{ij}+1}x_m(\alpha_j)=0,$ if $i\not= j$;

   $\;\;\;\;\;\;\left(ad x_0(-\alpha_i)\right)^{-a_{ij}+1}x_m(
   -\alpha_j)=0,$ if $i\not= j$;
 \\
   \\
for $i,j=0,1, \cdots , n$ and $k,m\in \mathbb{Z}$. It is known \cite{MRY} that there is an isomorphism $\psi$ between the two presentations of $T(X_n)$. In this paper we will identify the two presentations of the toroidal Lie algebra $T(X_n)$ via this isomorphism $\psi$. In particular, under this isomorphism we can identify the degree zero central elements $c_0={\not c}$ and $c_1= \delta(0)$, where $\delta$ is the null root in $Q$. We also remark that in our
definition of the toroidal Lie algebra, we use the root operators $\alpha_i(k)$
instead of coroot operators $\alpha_i^{\vee}(k)$ as in \cite{MRY}.

  Following \cite{MRY}, we introduce a $\mathbb{Z}\times Q$-gradation on $T(X_n)$ by assigning deg${\not c}=(0,0)$, deg$\alpha_i(k)=(k,0)$,
 deg$x_k(\pm\alpha_i)=(k,\pm\alpha_i)$, with $i=0,1, \cdots , n$ and $k\in \mathbb{Z}$.
We denote by $T_k^{\alpha}$ the subspace of $T(X_n)$ spanned by
the elements with degree $(k,\alpha)$ for $k\in \mathbb{Z}$,
$\alpha\in Q$. Then, under the isomorphism $\psi$, we have
$\psi^{-1}({\overline{s^kt^{-1}dt}})=\delta(k)\in T^0_k$, and
$\psi^{-1}({ \overline{s^kt^{r}s^{-1}ds}})\in T^{r\delta}_k$.

 Let $z,w,z_1,z_2,...$ be formal variables. We define formal power series with
 coefficients from the toroidal Lie algebra $T(X_n)$:
 $$
 \alpha_i(z)=\sum_{n\in \mathbb{Z}}\alpha_i(n)z^{-n-1},
\qquad x(\pm \alpha_i,z)=\sum_{n\in \mathbb{Z}}x_n(\pm\alpha_i)z^{-n-1},
 $$
 for $i=0,1, \cdots , n$.
We will use the delta function
\begin{equation*}
\delta(z-w)=\sum_{n\in\Z}w^nz^{-n-1}
\end{equation*}

Using $\displaystyle \frac1{z-w}=\sum_{n=0}^{\infty}z^{-n-1}w^n$, $|z|>|w|$,
we have the following useful expansions:
\begin{align*}
\delta(z-w)&=\iota_{z, w}((z-w)^{-1})+\iota_{w, z}((w-z)^{-1}),\\
\partial_w\delta(z-w)&=
\iota_{z, w}((z-w)^{-2})-\iota_{w, z}((w-z)^{-2}),
\end{align*}
where $\iota_{z, w}$ means expansion when $|z|>|w|$. For simplicity
in the following we will drop $\iota_{z, w}$ if it is
clear from the context.

Now the Lie algebra structure of $T(X_n)$ can be expressed in terms of the following power series identities:

 \vspace{3mm}
   $(R0)^\prime$ $[{\not c}, \alpha_i(z)]=0=[{\not c}, x(\pm\alpha_i,z)];$

   $(R1)^\prime$ $[\alpha_i(z),\alpha_j(w)]=
(\a_i | \a_j)\partial_w\delta(z-w){\not c};$

   $(R2)^\prime$ $[\alpha_i(z), x(\pm\alpha_j, w)]=
\pm (\a_i | \a_j)x(\pm\alpha_j,w)\delta (z-w);$

   $(R3)^\prime$ $[x(\alpha_i,z),x(-\alpha_j,w)]=
-\delta_{ij}\frac{2}{(\a_i | \a_j)}\left\{ \alpha_i(w)\delta(z-w)+
\partial_w\delta(z-w){\not c}\right\};$

   $(R4)^\prime$ $[x(\alpha_i,z),x(\alpha_i,w)]=0=
[x(-\alpha_i,z),x(-\alpha_i,w)]$,\\
and for $0 \le i\not= j \le n$,

   $\mathrm{ad}x(\pm\alpha_i,z_1)x(\pm\alpha_j,z_2)=0,$
   if $a_{ij}=0$

   $(\mathrm{ad}x(\pm\alpha_i,z_1))(\mathrm{ad}x(\pm\alpha_i,z_2))
   x(\pm\alpha_j,z_3)=0,$
   if $a_{ij}=-1$

   $(\mathrm{ad}x(\pm\alpha_i,z_1))(\mathrm{ad}x(\pm\alpha_i,z_2))
(\mathrm{ad}x(\pm\alpha_i,z_3))x(\pm\alpha_j,z_4)=0,$

if $a_{ij}=-2$.

\section{Representations of the Toroidal Algebra}
In this section we give a fermionic realizations for the toroidal
Lie algebra of classical types $A_{n-1}, B_n$, $D_n$ and $C_n$.

Let $\vep_i$ $(i=0, \ldots, n+1$) be a set of orthonormal basis of the vector space
$\mathbb C^{n+2}$ equipped with the innner product $( \ | \ )$ such that
\begin{equation*}
(\vep_i|\vep_j)=\delta_{ij},
\end{equation*}
Let $P_0=\mathbb Z\vep_1\oplus \cdots \oplus \mathbb Z\vep_{n}$
be a sublattice of rank $n$, and let
$\overline{c}=\frac1{\sqrt{2}}(\vep_0+i\vep_{n+1})$ correspond to
the null vector $\delta$ and $\overline{d}=
\frac1{\sqrt{2}}(\vep_0-i\vep_{n+1})$ be the dual gradation
operator. Then
\begin{align*}
(\overline{c}|\overline{c})&=(\overline{c}|\vep_i)=0\\
(\overline{d}|\overline{d})&=(\overline{d}|\vep_i)=0\\
(\overline{c}|\overline{d})&=1
\end{align*}
for $i=1, \ldots, n$.

The simple roots for the classical finite dimensional
Lie algebras can be realized
simply by defining the simple roots as follows:

\medskip
$\a_1=\vep_1-\vep_2$, $\cdots$, $\a_{n-1}=\vep_{n-1}-\vep_n$; for $A_{n-1}$.

$\a_1=\vep_1-\vep_2$, $\cdots$, $\a_{n-1}=\vep_{n-1}-\vep_n$,
$\a_n=\vep_n$; for $B_n$.

$\a_1=\frac{\vep_1-\vep_2}{\sqrt 2}$, $\cdots$,
$\a_{n-1}=\frac{\vep_{n-1}-\vep_n}{\sqrt 2}$, $\a_n={\sqrt
2}\vep_n$; for $C_n$.

$\a_1=\vep_1-\vep_2$, $\cdots$, $\a_{n-1}=\vep_{n-1}-\vep_n$,
$\a_n=\vep_{n-1}+\vep_n$; for $D_n$.

\medskip

Then the set of positive roots are:

$$
\Delta_+=\begin{cases} \{\vep_i-\vep_j|1\le i<j \le n\}, & \text{Type $A_{n-1}$}\\
\{\vep_i, \vep_i\pm\vep_j|1 \le i<j \le n\}, & \text{Type $B_n$}\\
\{\sqrt{2}\vep_i, \frac1{\sqrt 2}(\vep_i\pm\vep_j)|1 \le i<j \le
n\}, & \text{Type $C_n$}\\
 \{\vep_i\pm\vep_j|1 \le i<j \le n\}, &
\text{Type $D_n$}.
\end{cases}
$$

The highest (long) root $\a_{max}$ for each type is given as follows:

$$
\a_{max}=\begin{cases} \vep_1-\vep_n, & \text{Type $A_{n-1}$}
\\
\sqrt 2\vep_1 & \text{Type $C_n$}\\ \vep_1+\vep_2, & \text{Type
$B_n$ or $D_n$}

\end{cases}
$$

We further introduce the element
$$\a_0=\overline{c}-\a_{max}$$
in the lattice and then define $\beta=-\overline{c}+\vep_1$ for type
$ABD$, and $\beta=-\sqrt2\overline{c}+\vep_1$ for type $C$. Then we
have
$$
\a_0=\vep_{n}-\beta\ \ \text{for $A_{n-1}$}; \quad -\beta-\vep_2 \ \
\text{for $B_n, D_n$}; \quad \text{or} -\frac1{\sqrt
2}(\beta+{\vep_1}) \ \ \text{for $C_n$}.
$$
Note that $(\beta|\beta)=1, (\beta|\vep_i)=\delta_{1i}$.

Then $P=\mathbb Z\overline{c}\oplus\mathbb Z\vep_1\oplus \cdots
\oplus \mathbb Z \vep_n$ and $Q=\mathbb Z\overline{c}\oplus\mathbb
Z\a_1\oplus \cdots \oplus \mathbb Z \a_k= \mathbb Z\a_0\oplus \cdots
\oplus \mathbb Z \a_k$ ($k= n-1$ for type $A_{n-1}$ and $k= n$ for
types $B_n,D_n$) are the weight lattice and root lattice for the
corresponding affine Lie algebra, and $P_0=\mathbb Z\vep_1\oplus
\cdots \oplus \mathbb Z \vep_k$ and $Q_0=\mathbb Z\a_1\oplus \cdots
\oplus  \mathbb Z\a_k$ ($k= n-1$ for type $A_{n-1}$ and $k= n$ for
types $B_n,D_n$) are the weight lattice and root lattice for the
simple Lie algebras $A_{n-1}, B_n, C_n, D_n$. Then
$(\a_i|\a_j)=d_ia_{ij}$, where $a_{ij}$ are the entries of the
affine Cartan matrix of type $(ABCD)^{(1)}$, and the $d_i$'s are
given by:
$$
(d_0, d_1, \cdots, d_k) = \begin{cases} \{1, 1, \cdots, 1,1\}, & , k=n-1,\text{Type $A_{n-1}$}\\
\{1, 1, \cdots, 1,\frac{1}{2}\}, & , k=n,\text{Type $B_n$}\\
\{1, \frac{1}{2}, \cdots, \frac{1}{2},1\}, & k=n,\text{Type $C_n$}\\
\{1, 1, \cdots, 1,1\}, & , k=n,\text{Type $D_n$}.
\end{cases}
$$

\medskip

We introduce infinite dimensional Clifford algebras as follows. We
first let $\vep_{\overline i}$, $1\leq i\leq n$ be orthonormal
vectors such that $(\vep_{\overline i}|\vep_{\overline
j})=\delta_{ij}$ and $(\vep_{\overline i}|\vep_{j})=0$. Let
$\tilde{P}_{\mathbb C}$ be the $\mathbb C$-vector space spanned by
$\overline c$ and $\vep_i, 1 \le i \le n$ for types $AD$, by
$\overline c$ and $\vep_i, \vep_{\overline i}, 1 \le i \le n$ in
type $C$, and by $\overline c$, $\vep_i, 1 \le i \le n$ and a ghost
element $e$ (to be defined later) for types $B$. We also denote
$\overline\beta=-\sqrt2 \overline c+\vep_{\overline 1}$. Then we
define $\mathcal C=\mathcal C_0\oplus \mathcal C_1$, where both
subspaces $\mathcal C_0=\tilde{P}_{\mathbb C}$ and $\mathcal
C_1=\tilde{P}_{\mathbb C}^*$ are maximal isotropic subspaces. The
symmetric bilinear form on $\mathcal C$ is given by
$$
<b^*, a>=<a, b^*>=(a|b), \quad <a, b>=<a^*, b^*>=0, \qquad a, b\in
\tilde{P}_{\mathbb C}
$$

In this way we have a maximal polarization of $\mathcal C$.

The Clifford algebra $Cl(\tilde{P})$
is generated by the central element $C$
and the elements $a(k)$ and $a^*(k)$,
where $a \in \tilde{P}_{\mathbb C}= \tilde{P}\otimes {\mathbb C}$,
$a^*\in \tilde{P}_{\mathbb C}^*$, and $k\in \Za$ subject to the relations:
\begin{align*}
\{a(k), b(l)\}&=0 \\
\{a^*(k), b^*(l)\}&=0 \\
\{a(k), b^*(l)\}&=(a|b)\delta_{k, -l}C
\end{align*}
where $a, b\in \tilde{P}_{\mathbb C}$.
Note that the
anticommutation relations can be simply written as
$$
\{u(k), v(l)\}=<u, v>\delta_{k, -l}C, \qquad u, v\in \mathcal C.
$$

The representation space is the infinite dimensional vector space
$$\displaystyle
V=\bigotimes_{a_i}\left(\bigotimes_{k\in\Zp} \mathbb C[a_i(-k)]
\bigotimes_{k\in\Zp} \mathbb C[a_i^*(-k)]\right)
$$
where $a_i$ runs through any basis in $\tilde P$,
say $\overline{c}$, $\vep_i$'s and $\vep_{\overline i}$'s.

The Clifford algebra acts on the space $V$ by the usual action:
$a(-k)$ acts as a creation operator, $a(k)$ as an annihilation operator
and $C$ acts as the identity.

For any two fermionic fields $u(z)=\sum_{n\in\Za}a(n)z^{-n-1/2}$
and $v(z)=\sum_{n\in\Za}b(n)z^{-n-1/2}$ we define
the normal ordering $:u(z)v(w):$ by their components:
\begin{equation}
:u(m)v(n):=\begin{cases} u(m)v(n) & m<0\\
-v(n)u(m) & m>0\end{cases}.
\end{equation}

It follows from the definition that the normal ordering satisfies
the relation:
$$:u(z)v(w): \,= -:v(w)u(z):.$$

Based on the normal product of two fields, we can
define the normal product of $n$ fields inductively as follows. We
define that

\begin{align*}
&:u_1(z_1)u_2(z_2)\cdots u_n(z_n):\\
&=:u_1(z_1)(:u_2(z_2)\cdots u_n(z_n):):,
\end{align*}
and then use induction till we reach 2 fields.

We define the contraction of two states by
$$
\underbrace{a(z)b(w)}=a(z)b(w)-:a(z)b(w):,
$$
which contains all poles for $a(z)b(w)$. In general, the contraction
of several pairs of states is given inductively by
the following rule.

\begin{proposition} \label{OPE} The basic operator product expansions are:
for $x, y\in{\mathcal C}$ we have
$$
\underbrace{u(z)v(w)}=\frac{<u, v>}{z-w}
$$
In particular we have for $a, b\in P_{\mathbb C}$
\begin{align*}
\underbrace{a(z)b(w)}&=\underbrace{a^*(z)b^*(w)}=0,\\
\underbrace{a(z)b^*(w)}&=\underbrace{a^*(z)b(w)}
=\frac{(a, b)}{z-w}, \qquad
\end{align*}
\end{proposition}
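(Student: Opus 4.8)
The plan is to compute the contraction $\underbrace{u(z)v(w)}$ directly from its definition $\underbrace{u(z)v(w)}=u(z)v(w)-{:}u(z)v(w){:}$ by expanding both sides in their Fourier components and using the Clifford anticommutation relation $\{u(m),v(n)\}=\langle u,v\rangle\delta_{k,-l}C$ together with the component definition of normal ordering. Since both $u(z)=\sum_{m\in\Za}u(m)z^{-m-1/2}$ and $v(w)=\sum_{n\in\Za}v(n)w^{-n-1/2}$, the product $u(z)v(w)$ expands as a double sum $\sum_{m,n}u(m)v(n)z^{-m-1/2}w^{-n-1/2}$, and the difference $u(m)v(n)-{:}u(m)v(n){:}$ kills every term where $u(m)$ is already in normal order (i.e. $m<0$), leaving only the reordering correction for $m>0$.

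First I would observe that by the component definition, for $m<0$ we have $u(m)v(n)-{:}u(m)v(n){:}=0$, while for $m>0$ we have $u(m)v(n)-{:}u(m)v(n){:}=u(m)v(n)+v(n)u(m)=\{u(m),v(n)\}=\langle u,v\rangle\delta_{m+n,0}C$. Thus only terms with $m>0$ and $n=-m$ survive, and each contributes $\langle u,v\rangle C\, z^{-m-1/2}w^{m-1/2}$. Summing over $m\in\Zp$ (the positive half-integers) gives
$$
\underbrace{u(z)v(w)}=\langle u,v\rangle\, C\sum_{m\in\Zp} z^{-m-1/2}w^{m-1/2}.
$$
Since $C$ acts as the identity on $V$, this is just $\langle u,v\rangle$ times a scalar series. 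Next I would reindex the sum by writing $m=k+1/2$ with $k\geq 0$ an integer, so that $z^{-m-1/2}w^{m-1/2}=z^{-k-1}w^{k}$, and the series becomes $\sum_{k=0}^\infty z^{-k-1}w^{k}$, which is exactly the expansion $\iota_{z,w}((z-w)^{-1})=\tfrac{1}{z-w}$ recorded in the excerpt (valid for $|z|>|w|$). This yields $\underbrace{u(z)v(w)}=\langle u,v\rangle/(z-w)$, the general formula.

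The specializations are then immediate from the bilinear form: for $a,b\in P_{\mathbb C}$ we have $\langle a,b\rangle=\langle a^*,b^*\rangle=0$ by the isotropy relation $\langle a,b\rangle=\langle a^*,b^*\rangle=0$, giving the vanishing contractions, while $\langle a,b^*\rangle=\langle a^*,b\rangle=(a|b)$ gives the two nonzero formulas. The main subtlety — rather than an obstacle — is bookkeeping the index conventions: one must be careful that the fields are indexed by half-integers $\Za$ so that the surviving condition $m+n=0$ with $m>0$ selects precisely $m\in\Zp$, and that the half-integer shift in the exponents is exactly absorbed by the geometric series for $(z-w)^{-1}$. I would also note that the antisymmetry $\langle u,v\rangle=\langle v,u\rangle$ (from the symmetric form) together with the fermionic sign is consistent with the earlier identity ${:}u(z)v(w){:}=-{:}v(w)u(z){:}$, so the computation is self-consistent regardless of the order in which the two fields are written.
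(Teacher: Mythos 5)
Your proposal is correct and follows essentially the same route as the paper: expand the fields in half-integer modes, observe that the difference $u(z)v(w)-{:}u(z)v(w){:}$ reduces to the anticommutators $\{u(m),v(n)\}=\langle u,v\rangle\delta_{m+n,0}C$ for $m>0$, and sum the resulting geometric series to get $1/(z-w)$ in the region $|z|>|w|$. The only difference is cosmetic (you argue uniformly for general $u,v\in\mathcal C$ where the paper does the case $a(z)b^*(w)$ and says the rest are analogous), and the final remark should say ``symmetry'' rather than ``antisymmetry'' of $\langle\cdot,\cdot\rangle$, but nothing of substance is affected.
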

\begin{proof}
In fact one has
\begin{align*}
a(z)b^*(w)&=:a(z)b^*(w):+\sum_{m\in \Zp}[a(m), b^*(n)]z^{-m-1/2}w^{-n-1/2}\\
&=:a(z)b^*(w):+(a, b)C\sum_{0<m\in\Zp}z^{-m-1/2}w^{-m-1/2}\\
&=:a(z)b^*(w):+\frac{(a, b)}{z-w}.
\end{align*}
The other OPEs are proved in the same manner.
\end{proof}

\begin{proposition} \label{anticommutation} The fermionic fields satisfy the following
anticommutation relations:
\begin{align*}
\{a(z), b(w)\}&=\{a^*(z), b^*(w)\}=0,\\
\{a(z), b^*(w)\}&=(a, b)\delta(z-w).
\end{align*}
\end{proposition}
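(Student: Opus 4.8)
The plan is to reduce both identities to the defining anticommutation relations of the Clifford algebra $Cl(\tilde P)$ by expanding each field into its mode components and then resumming. First I would write out the anticommutator of the fields as a double sum over the half-integer modes,
$$
\{a(z), b^*(w)\} = \sum_{m,n\in\Za} \{a(m), b^*(n)\}\, z^{-m-1/2} w^{-n-1/2},
$$
and substitute the Clifford relation $\{a(m), b^*(n)\} = (a|b)\delta_{m,-n} C$. Since $C$ acts as the identity on the Fock space $V$, the Kronecker delta forces $n=-m$ and collapses the double sum to a single sum over $m\in\Za$.

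Next I would carry out the index shift $m = k + 1/2$ with $k\in\mathbb Z$, under which the exponent $-m-1/2$ becomes $-k-1$ while $-n-1/2 = m-1/2$ becomes $k$. The surviving sum is then
$$
(a|b) \sum_{k\in\mathbb Z} z^{-k-1} w^{k} = (a|b)\,\delta(z-w),
$$
by the very definition of the formal delta function recorded earlier in this section. This gives the second identity. For the two vanishing relations I would run the identical mode expansion for $\{a(z), b(w)\}$ and $\{a^*(z), b^*(w)\}$; in each case every coefficient $\{a(m), b(n)\}$, respectively $\{a^*(m), b^*(n)\}$, is zero by the Clifford relations, so the double sums vanish term by term and the anticommutators are identically zero as formal series.

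There is no genuine obstacle here: the statement is a direct transcription of the componentwise Clifford relations into generating-series form, and the argument is essentially the same one already used in the proof of Proposition~\ref{OPE}. The only point that demands care is the bookkeeping of the half-integer mode indices, so that the shift $m=k+1/2$ is applied consistently and the resulting sum is recognized as $\delta(z-w)$ rather than a misindexed variant.
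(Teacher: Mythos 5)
Your proof is correct, but it takes a genuinely different route from the paper's. You work entirely at the level of modes: expand the anticommutator as a double sum, invoke the Clifford relation $\{a(m), b^*(n)\}=(a|b)\delta_{m,-n}C$, and resum over all $m\in\Za$ (via the shift $m=k+1/2$, with the surviving exponent of $w$ being $m-1/2=k$ since $n=-m$) to recognize $\delta(z-w)=\sum_{k\in\Z}z^{-k-1}w^k$ directly. The paper instead deduces the statement from Proposition \ref{OPE}: it writes $u(z)v(w)=\, :u(z)v(w): + \langle u,v\rangle\,\iota_{z,w}\bigl((z-w)^{-1}\bigr)$ and $v(w)u(z)=\, :v(w)u(z): + \langle u,v\rangle\,\iota_{w,z}\bigl((w-z)^{-1}\bigr)$, cancels the normal-ordered parts using the antisymmetry $:u(z)v(w): \, = -:v(w)u(z):$, and identifies the sum of the two one-sided expansions with $\delta(z-w)$. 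Your argument is more elementary: it needs only the defining Clifford relations and the definition of the formal delta function, with no normal-ordering bookkeeping, because the anticommutator of modes is already central so the normal-ordered pieces never enter. What the paper's route buys is coherence with the framework used for everything that follows: the delta function arises there as $\iota_{z,w}((z-w)^{-1})+\iota_{w,z}((w-z)^{-1})$, i.e., from the singular (contraction) parts of the two OPEs, which is precisely the mechanism driving the later bracket computations via Wick's theorem (Proposition \ref{bracket} and the main theorem). Both proofs are complete and correct; yours also covers the vanishing relations cleanly, term by term, just as the paper's does implicitly through $\langle u,v\rangle=0$ for those pairs.
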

\begin{proof} As in the proof of Proposition
\ref{OPE}, we have
\begin{align*}
\{u(z), v(w)\}&=u(z)v(w)+v(w)u(z)\\
&=<u, v>\left(\frac1{z-w}+\frac1{w-z}\right) \\
&=<u, v>\delta(z-w)
\end{align*}
since $:u(z)v(w):+:v(w)u(z):= 0$.
\end{proof}

The following result is easily obtained by Wick's theorem.
\begin{proposition} \label{bracket} The brackets among normal order products are given by:
\begin{align*}
&[:r_1(z)r_2(z):,:s_1(w)s_2(w):]=\\
&\quad<r_1, s_2>:r_2(z)s_1(z):\delta(z-w)-<r_1, s_1>:r_2(z)s_2(z):\delta(z-w)\\
&\, \, +<r_2, s_1>:r_1(z)s_2(z):\delta(z-w)-<r_2, s_2>:r_1(z)s_1(z):\delta(z-w)\\
&\quad+(<r_1, s_2><r_2, s_1>-<r_1, s_1><r_2, s_2>)\partial_w\delta(z-w).\\
\end{align*}
\end{proposition}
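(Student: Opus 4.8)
The plan is to apply Wick's theorem to the two operator products $:r_1(z)r_2(z):\,:s_1(w)s_2(w):$ and $:s_1(w)s_2(w):\,:r_1(z)r_2(z):$ and subtract them to form the commutator. Since each factor is already a normal product of two fermionic fields, only cross-contractions (one field at $z$ against one field at $w$) occur, so Wick's theorem expresses each product as the fully normal-ordered term $:r_1(z)r_2(z)s_1(w)s_2(w):$, plus the four single cross-contractions, plus the two double cross-contractions. The fully normal-ordered term is identical in both orderings, so it cancels in the bracket; this cancellation is exactly what forces the right-hand side to be supported on $z=w$.

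First I would record the single-contraction terms. By Proposition \ref{OPE} each cross contraction has the form $\underbrace{r_i(z)s_j(w)}=\frac{<r_i,s_j>}{z-w}$ inside the product $AB$, and $\underbrace{s_j(w)r_i(z)}=\frac{<r_i,s_j>}{w-z}$ inside the product $BA$, using the symmetry of the form. The Wick sign attached to contracting the fields occupying positions $i$ and $j$ of the string is $(-1)^{(\text{number of fields between them})}$; for the four contractions $r_1s_1,\ r_1s_2,\ r_2s_1,\ r_2s_2$ these signs are $-,+,+,-$ respectively, which already reproduces the sign pattern of the first four terms of the statement. Combining the $AB$ and $BA$ contributions, and using the reordering identity $:s_j(w)r_{i'}(z):=-:r_{i'}(z)s_j(w):$ together with $\frac{1}{z-w}+\frac{1}{w-z}=\delta(z-w)$, each single contraction collapses to (that same sign) $<r_i,s_j>:r_{i'}(z)s_{j'}(w):\delta(z-w)$; finally the local identity $f(w)\delta(z-w)=f(z)\delta(z-w)$ lets me evaluate the surviving regular normal product at $z$, producing exactly the four $\delta(z-w)$ terms.

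For the double contractions there is no residual normal product, so only the two pole products $\frac{<r_1,s_2><r_2,s_1>}{(z-w)^2}$ (nested pairing, sign $+$) and $\frac{<r_1,s_1><r_2,s_2>}{(z-w)^2}$ (crossing pairing, sign $-$) survive in $AB$. In $BA$ these reappear with $(z-w)^{-2}$ replaced by $(w-z)^{-2}$, so subtracting and invoking $\frac{1}{(z-w)^2}-\frac{1}{(w-z)^2}=\partial_w\delta(z-w)$ yields the last term $(<r_1,s_2><r_2,s_1>-<r_1,s_1><r_2,s_2>)\partial_w\delta(z-w)$.

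The main obstacle, and essentially the only subtlety, is the fermionic sign bookkeeping: one must track both the Wick permutation sign for each pairing and the extra sign incurred when reordering the leftover normal product through $:u(z)v(w):=-:v(w)u(z):$, because it is precisely the interplay of these signs that converts the naive difference $\frac{1}{z-w}-\frac{1}{w-z}$ into the \emph{sum} $\delta(z-w)$ for the single contractions, while leaving the \emph{difference} $\partial_w\delta(z-w)$ for the double ones. Once the signs are checked on one representative term, say the $r_1s_2$ contraction, the remaining cases follow by the identical computation.
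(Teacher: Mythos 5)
Your proposal is correct and follows exactly the route the paper takes: the paper's proof consists of the single remark that the result ``is easily obtained by Wick's theorem,'' and your argument is precisely that computation carried out in full, with the correct Wick signs $-,+,+,-$ for the single cross-contractions, the cancellation of the fully normal-ordered term, and the identities $\iota_{z,w}(z-w)^{-1}+\iota_{w,z}(w-z)^{-1}=\delta(z-w)$ and $\iota_{z,w}(z-w)^{-2}-\iota_{w,z}(w-z)^{-2}=\partial_w\delta(z-w)$ producing the $\delta$ and $\partial_w\delta$ terms respectively. In particular you correctly identify the one genuine subtlety -- that the reordering sign $:s(w)r(z):=-:r(z)s(w):$ is what turns the single-contraction contributions into a sum of the two expansions (hence $\delta$) while the double contractions remain a difference (hence $\partial_w\delta$) -- so your write-up supplies the details the paper omits.
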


The inner product of the underlying Lie algebra
can be extended to that of the linear factors as follows:

$$<:r_1r_2:, :s_1s_2:>= - <r_1, s_1><r_2, s_2>
 +<r_1, s_2><r_2, s_1>$$

For any root vector $\a=\sum_{i\in I}\vep_i-\sum_{j\in I'}\vep_j$,
we define the field operator $X(\a, z)$ as follows.
\begin{equation}
X(\a, z)= :\prod_{i\in I}^{\rightarrow}\vep_i(z)\prod_{j\in
I'}^{\rightarrow}\vep_j^*(z):,
\end{equation}
where $\vec{\prod}_{i\in I}$ means the ordered product
according to the natural order in $I$.

Furthermore, we introduce a ghost field $e(z)=\sum_{k\in\mathbb Z+1/2}e(k)z^{-k-1/2}$
such that
\begin{align*}
(e|e)=-1 &, (e|\vep_i)=0\\
\{e(k), e(l)\}&=-\delta_{k, -l}
\end{align*}

Then we have
\begin{align*}
\underbrace{e(z)\vep_i(w)}&=\underbrace{e(z)\vep_i^*(w)}=0\\
\underbrace{e(z)e(w)}&=\frac{-1}{z-w}
\end{align*}

\begin{theorem} Under the following map we have a
level (1,0) representation of the toroidal Lie algebra of classical
types:
\begin{align*}
X(\a_0, z)&=\begin{cases}:\vep_n(z)\beta^*(z): &
\text{for $A_{n-1}$}\\
\frac1{\sqrt 2}(:\beta^*(z)\vep_{\overline 1}^*(z):-:\vep_{1}^*(z)\overline{\beta}^*(z):) & \text{for $C_n$}\\
:\beta^*(z)\vep_2^*(z): & \text{for $B_n$ or $D_n$}\end{cases}\\
X(-\a_0, z)&=\begin{cases}:\vep_n^*(z)\beta(z): &
\text{for $A_{n-1}$}\\
\frac1{\sqrt 2}(:\beta(z)\vep_{\overline 1}(z):-:\vep_{
1}(z)\overline{\beta}(z):)
& \text{for $C_n$}\\
:\beta(z)\vep_2(z): & \text{for $B_n$ or $D_n$}\end{cases}\\
X(\a_i, z)&=\begin{cases}:\vep_i(z)\vep_{i+1}^*(z): \quad
1\leq i\leq n-1, &\text{for $A_{n-1}B_nD_n$}\\
:\vep_i(z)\vep_{i+1}^*(z):-:\vep_{\overline{i+1}}(z)\vep_{\overline
i}^*(z):
& \text{for $C_n$}\end{cases}\\
X(-\a_i, z)&=\begin{cases}:\vep_i^*(z)\vep_{i+1}(z):, \quad
1\leq i\leq n-1, &\text{for $A_{n-1}B_nD_n$}\\
:\vep_i^*(z)\vep_{i+1}(z):-:\vep_{\overline{i+1}}^*(z)\vep_{\overline
i}(z):
& \text{for $C_n$}\end{cases}\\
X(\a_n, z)&=\begin{cases}
\sqrt{2}:\vep_n(z)e(z): & \text{for $B_n$}\\
:\vep_n(z)\vep^*_{\overline n}(z): & \text{for $C_n$}\\
:\vep_{n-1}(z)\vep_n(z): & \text{for $D_n$}\\
\end{cases}\\
X(-\a_n, z)&=\begin{cases}
\sqrt{2}:e(z)\vep_n^*(z): & \text{for $B_n$}\\
:\vep^*_{n}(z)\vep_{\overline n}(z): & \text{for $C_n$}\\
:\vep_{n-1}^*(z)\vep_n^*(z): & \text{for $D_n$}\\\end{cases}
\end{align*}
and the fields for the simple roots are represented by
\begin{align*}
\a_0(z)&=\begin{cases}:\vep_n(z)\vep_n^*(z):-
:\beta(z)\beta^*(z): & \text{for $A_{n-1}$}\\
:\beta^*(z)\beta(z):+:\vep_2^*(z)\vep_2(z):& \text{for $B_n$ or
$D_n$}\\
\frac12(:\beta^*(z)\beta(z):+:\vep_1^*(z)\vep_1(z): & \\
\quad +:\overline{\beta}^*(z)\overline{\beta}(z):
+:\vep_{\overline{1}}^*(z)\vep_{\overline{1}}(z))&  \text{for
$C_{n}$}
\end{cases}\\
\a_i(z)&=\begin{cases}
:\vep_i(z)\vep_i^*(z):-:\vep_{i+1}(z)\vep_{i+1}^*(z): &
1\leq i\leq n-2,\\
\frac12(:\vep_i(z)\vep_i^*(z):-:\vep_{i+1}(z)\vep_{i+1}^*(z): & \\
\quad -:\vep_{\overline i}(z)\vep_{\overline i}^*(z):
+:\vep_{\overline{i+1}}(z)\vep_{\overline{i+1}}^*(z):)& \text{for
$C_n$}
\end{cases}
\end{align*}
\begin{align*}
\a_n(z)&=\begin{cases}
:\vep_n(z)\vep_n^*(z):& \text{for $B_n$}\\
:\vep_n(z)\vep_n^*(z):-:\vep_{\overline n}(z)\vep_{\overline n}^*(z):& \text{for $C_n$}\\
:\vep_{n-1}(z)\vep_{n-1}^*(z):+:\vep_n(z)\vep_n^*(z):& \text{for
$D_n$}
\end{cases}
\end{align*}
\end{theorem}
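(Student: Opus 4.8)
The plan is to verify that the proposed assignment satisfies each of the power-series relations $(R0)'$–$(R4)'$ together with the Serre relations, since under the isomorphism $\psi$ these generate all the defining relations of $T(X_n)$. Every field in the statement is a normal-ordered quadratic in the free fermions $\vep_i(z),\vep_i^*(z),\vep_{\overline i}(z),\vep_{\overline i}^*(z),e(z)$ and their $\be,\overline\be$ combinations, so the entire verification reduces to the bracket formula of Proposition~\ref{bracket} (Wick's theorem) fed by the contractions in Proposition~\ref{OPE} and the anticommutators in Proposition~\ref{anticommutation}. First I would dispose of $(R0)'$: since $\not c=C$ acts as the identity on $V$ it is central and commutes with every field, which simultaneously fixes $k_0=1$; the value $k_1=0$ reflects the null property $(\overline c|\overline c)=(\overline c|\vep_i)=0$ of the vector $\overline c$ representing the null root $\delta$, so the associated current contracts trivially.

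Next I would treat the Cartan relations $(R1)'$ and $(R2)'$. Applying Proposition~\ref{bracket} to $[\a_i(z),\a_j(w)]$, the four single-contraction terms carrying $\delta(z-w)$ cancel in pairs, so these currents are mutually commuting Heisenberg-type fields, leaving only the double-contraction term $(\a_i|\a_j)\partial_w\delta(z-w)\not c$; here one checks that the inner products computed from the $\vep$-realization of the simple roots reproduce $(\a_i|\a_j)=d_ia_{ij}$ with the listed normalizations $d_i$. For $(R2)'$ the same formula shows that $\a_i(z)$ acts on a root field $x(\pm\a_j,w)$ by the scalar $\pm(\a_i|\a_j)$ times $\delta(z-w)$, the central $\partial_w\delta$ contribution vanishing because no matched pair of cross-contractions between a Cartan current and a root field occurs.

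The core of the argument is $(R3)'$. For $i=j$ the single contractions in Proposition~\ref{bracket} assemble into $\a_i(w)\delta(z-w)$ and the double contraction produces the central term $\partial_w\delta(z-w)\not c$, with overall coefficient $-2/(\a_i|\a_i)$; this is exactly where the scalar factors and the auxiliary fields earn their keep. For $i\neq j$ every term vanishes because the root vectors are built from disjoint fermion labels. I would organize the $i=j$ computation type by type, reserving the special nodes for last: for $X(\a_n,z)=\sqrt2\,:\!\vep_n(z)e(z)\!:$ in type $B$ the identity $:\!e(z)e(z)\!:\,=0$ kills the spurious term while the ghost norm $(e|e)=-1$ flips the sign so that $-2\{\a_n(w)\delta(z-w)+\partial_w\delta(z-w)\}$ emerges with $(\a_n|\a_n)=1$; for type $C$ the two-term expressions for $\a_0$ and $\a_n$ must be expanded and the cross brackets between the barred and unbarred sectors shown to recombine into $\a_n(z)=\,:\!\vep_n(z)\vep_n^*(z)\!:-:\!\vep_{\overline n}(z)\vep_{\overline n}^*(z)\!:$ with $(\a_n|\a_n)=2$.

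Finally, $(R4)'$ follows from $:\!u(z)u(z)\!:\,=0$ together with Proposition~\ref{bracket}, since for the special nodes the surviving term is a normal-ordered square and hence vanishes. The Serre relations are then obtained by computing the iterated adjoint actions $\mathrm{ad}\,x(\pm\a_i,z_1)\cdots$ and observing that the required power $-a_{ij}+1$ of contractions exhausts the available fermion labels, forcing a normal-ordered square. I expect the main obstacle to be the treatment of the special nodes $\a_0$ and, above all, the type $C$ node $\a_n=\sqrt2\,\vep_n$: a naive single fermion gives $:\!b(z)b(z)\!:\,=0$, so one must verify that the doubled barred/unbarred fermions in type $C$ and the negative-norm ghost $e(z)$ in type $B$ yield precisely the right central charge and signs in $(R3)'$ while still satisfying $(R4)'$ and the Serre relations. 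This interlocking of the auxiliary fields with the normalizations $d_i$ is the delicate point on which the whole construction turns.
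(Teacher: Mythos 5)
Your overall strategy coincides with the paper's proof: both verify the MRY relations $(R0)'$--$(R4)'$ together with the Serre relations by applying the bracket formula of Proposition~\ref{bracket} (fed by the contractions of Proposition~\ref{OPE}) to the quadratic fields, with the negative-norm ghost $e(z)$ carrying the type $B$ short root and the doubled fermions $\vep_{\overline i}$ carrying type $C$; your treatment of the diagonal brackets, the central terms, and $(R4)'$ matches the paper's computations.

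There is, however, one step that is wrong as stated and would fail if carried out literally: your claim that in $(R3)'$, \emph{for $i\neq j$ every term vanishes because the root vectors are built from disjoint fermion labels}. Adjacent root vectors are never label-disjoint (both $X(\a_i,z)$ and $X(-\a_{i+1},w)$ involve the index $i+1$); in the generic case the vanishing happens because the only available contractions pair starred with starred fields, not because labels are disjoint. More importantly, at pairs involving the special nodes a single contraction genuinely survives and must be argued away. In type $B$, $[X(\a_0,z),X(-\a_1,w)]=[:\be^*(z)\vep_2^*(z):,:\vep_1^*(w)\vep_2(w):]$ contains the nonzero contraction of $\vep_2^*(z)$ against $\vep_2(w)$ and equals $-:\be^*(z)\vep_1^*(z):\delta(z-w)$; this vanishes only after invoking $\be=-\overline c+\vep_1$ and $:\overline c(z)u(z):=0$, so that $:\be^*(z)\vep_1^*(z):=:\vep_1^*(z)\vep_1^*(z):=0$. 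In type $D$, $[X(\a_{n-1},z),X(-\a_n,w)]$ equals $-:\vep_n^*(z)\vep_n^*(w):\delta(z-w)$ and dies only by the coincident-point identity $:u(z)u(z):=0$; in type $C$ the cross terms between the barred and unbarred halves of $X(\a_0,z)$ and $X(-\a_1,w)$ must be shown to cancel using the same identities. These are exactly the brackets to which the paper devotes explicit computations, and they are part of the reason the auxiliary fields $\be$, $\overline\be$, $e$ are consistent at all. Your outline does single out the special nodes as delicate, but only for the diagonal relations, $(R4)'$, and the Serre relations; as written, the off-diagonal case of $(R3)'$ is left without a valid argument, although the repair uses only tools you already cite.
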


\begin{proof}

First for type $X\neq C$ and $i, j=1, \ldots, n-1 \; (n-2 \;
\text{for} \; A_{n-1})$, using Proposition \ref{bracket} we have
\begin{align*}
&[X(\a_i, z), X(-\a_j, w)]\\
&=-[:\vep_i(z)\vep_{i+1}^*(z):,:\vep_{j+1}(w)\vep_j^*(w):]\\
&=-\delta_{ij}\left((:\vep_{i+1}^*(z)\vep_{i+1}(z):+:\vep_i(z)\vep_{i}^*(z):)
\delta(z-w)+\partial_w\delta(z-w)\right) \\
&=-\delta_{ij}\left(\a_i(z)\delta(z-w)+\partial_w\delta(z-w)\right)
\end{align*}

Similarly we have for $i, j=1, \ldots, n-1 \; (n-2 \; \text{for} \; A_{n-1})$
\begin{align*}
&[\a_i(z),\a_j(w)]\\
&=[:\vep_i(z)\vep_i^*(z):-:\vep_{i+1}(z)\vep_{i+1}^*(z):,:\vep_j(z)\vep_j^*(z):-:\vep_{j+1}(z)\vep_{j+1}^*(z):]\\
&=2\delta_{ij}\partial_w\delta(z-w)-\delta_{i+1,j}\partial_w\delta(z-w)-\delta_{i,j+1}\partial_w\delta(z-w)\\
&=a_{ij}\partial_w\delta(z-w)=(\a_i | \a_j)\partial_w\delta(z-w),
\end{align*}
where $a_{ij}$ are the entries of type $A$ Cartan matrix.

More generally using Proposition \ref{bracket} we have
\begin{align*}
&[X(\vep_i-\vep_j, z), X(\vep_k-\vep_l, w)]\\
&=\delta_{jk}X(\vep_i-\vep_l, z)\delta(z-w)
-\delta_{li}X(\vep_k-\vep_j, z)\delta(z-w)\\
&\qquad\qquad +\partial_w\delta(z-w).
\end{align*}

Now for any $k\neq l$ we have
\begin{align*}
&[\alpha_i(z), X(\vep_k-\vep_l, w)]\\
&=[:\vep_i(z)\vep_i^*(z):-:\vep_{i+1}(z)\vep_{i+1}^*(z):,
:\vep_k(w)\vep_l^*(w)]\\
&=(\delta_{ik}-\delta_{il}-\delta_{i+1,k}+\delta_{i+1,l})
X(\vep_k-\vep_l, z)\delta(z-w)\\
&=(\a_i|\vep_k-\vep_l)
X(\vep_k-\vep_l, z)\delta(z-w)
\end{align*}

It then follows that
the operators $X(\a_i, z)$, $X(-\a_i, z)$ generate a type $A$
subalgebra for $i=1, \ldots, n-1\; (n-2 \; \text{for} \; A_{n-1})$.

In type $C$, similar computation gives for $1\leq i, j\leq n-1$
$$[X(\a_i, z), X(-\a_j, w)]
=-2\delta_{ij}\left(\a_i(z)\delta(z-w)+\partial_w\delta(z-w)\right),
$$
and
\begin{align*}
&[\a_i(z),\a_j(w)]\\
&=d_i^{-1}d_j^{-1}2(2\delta_{ij}-\delta_{i+1,j}-\delta_{i,j+1})\partial_w\delta(z-w)\\
&=(\alpha_i|\alpha_j)\partial_w\delta(z-w),
\end{align*}

We now check for the special nodes. First for type $A_{n-1}$, if we
start with $i=0$ or $i=n-1$ the above relation is still valid
provided we take the indices modulo $n$ and treating $\beta(z)$ as
$\vep_1(z)$. In particular for type $A_{n-1}$ we have
\begin{align*}
&[X(\a_{n-1}, z), X(-\a_0, w)]\\
&=-[:\vep_{n-1}(z)\vep_0^*(z):, :\vep_n^*(w)\beta(w):]\\
&= 0.
\end{align*}
\begin{align*}
&[X(\a_0, z), X(-\a_0, w)]=[:\vep_n(z)\beta^*(z):,:\vep_n^*(w)\beta(w):]\\
&=-\left(:\vep_n^*(z)\vep_n(z):\delta(z-w)-:\beta(z)\beta^*(z):\right)
\delta(z-w)-\partial_w\delta(z-w) \\
&=-\left(\a_0(z)\delta(z-w)+\partial_w\delta(z-w)\right)
\end{align*}

Furthermore, in this case we have
\begin{align*}
&[\a_{n-1}(z),\a_0(w)]\\
&=[:\vep_{n-1}(z)\vep_{n-1}^*(z):-:\vep_n(z)\vep_n^*(z):,
:\vep_n(w)\vep_n^*(w):
-:\beta(w)\beta^*(w):]\\
&=-[:\vep_n(z)\vep_n^*(z):,:\vep_n(w)\vep_n^*(w):]\\
&=-\partial_w{\delta(z-w)}=(\a_{n-1}| \a_0)\partial_w{\delta(z-w)}
\end{align*}

In type $B_n$ case we first have that for $k\neq l$
\begin{align*}
&[\a_0(z), X(\ep_k-\ep_l, w)]=-[:\beta(z)\beta^*(z):+
:\vep_2(z)\vep_2^*(z),:\vep_k(w)\vep_l^*(w):]\\
&=(-\ep_1-\ep_2|\ep_k-\ep_l)X(\ep_k-\ep_l, w)\delta(z-w)\\
&=(\a_0|\ep_k-\ep_l)X(\ep_k-\ep_l, w)\delta(z-w)
\end{align*}

\begin{align*}
&[X(\a_0, z), X(-\a_1, w)]=[:\beta^*(z)\vep_2^*(z):,:\vep_1^*(w)\vep_2(w):]\\
&=-:\beta^*(z)\vep_1^*(w):\delta(z-w)=-:\vep_1^*(z)\vep_1^*(z):\delta(z-w)=0,
\end{align*}
where we have used the property that $\beta(z)=-\overline{c}(z)+\vep_1(z)$ and
\newline  $:\overline{c}(z)u(z):=0$ for any $u\in \mathcal C$.

\begin{align*}
&[X(\a_0, z), X(-\a_0, w)]=[:\beta^*(z)\vep_2^*(z):,:\beta(w)\vep_2(w):]\\
&=-(:\beta^*(z)\beta(w):+:\vep_2^*(z)\vep_2(w):)\delta(z-w)-\partial_w\delta(z-w)\\
&=-(\a_0(z)\delta(z-w)+\partial_w\delta(z-w))
\end{align*}

\begin{align*}
&[X(\a_{n-1}, z), X(-\a_n, w)]\\
&=[:\vep_{n-1}(z)\vep_{n}^*(z):,:e(w)\vep_n^*(w):]=0
\end{align*}

In this case we also have
\begin{align*}
&[X(\a_{n}, z), X(-\a_n, w)]\\
&=2[:\vep_{n}(z)e(z):,:e(w)\vep_{n}^*(w):]\\
&=-2\left(:\vep_n(z)\vep_n^*(w):\delta(z-w)-:e(z)e(w):\delta(z-w)
+\partial_w\delta(z-w)\right) \\
&=-2\left(\a_n(z)\delta(z-w) +\partial_w\delta(z-w)\right),
\end{align*}
where we have used the fact that $:e(z)e(z):=0$.

Now for type $D_n$ case the relations involving $i=0$ are exactly
same as in type $B_n$ case. For the other relations we have:
\begin{align*}
&[X(\a_{n-1}, z), X(-\a_n, w)]\\
&=[:\vep_{n-1}(z)\vep_{n}^*(z):,:\vep_{n-1}^*(w)\vep_n^*(w):]\\
&= -:\vep_n^*(z)\vep_n^*(w):\delta(z-w)\\
&= -:\vep_n^*(z)\vep_n^*(z):\delta(z-w)= 0
\end{align*}

\begin{align*}
&[X(\a_{n}, z), X(-\a_n, w)]\\
&=[:\vep_{n-1}(z)\vep_n(z):,:\vep_{n-1}^*(w)\vep_n^*(w):]\\
&=-\left((:\vep_n(z)\vep_n^*(w):+:\vep_{n-1}(z)\vep_{n-1}^*(w):)\delta(z-w)+\partial_w\delta(z-w)\right) \\
&=-\left(\a_n(z)\delta(z-w) +\partial_w\delta(z-w)\right),
\end{align*}

For type $C_n$ we have
\begin{align*}
&[X(\a_0, z), X(-\a_0, w)]=\frac12[:\beta^*(z)\vep_{\overline 1}^*(z):,:\beta(w)\vep_{\overline 1}(w):]\\
&\qquad\qquad +\frac12[:\vep_1^*(z)\overline{\beta}^*(z):,:\vep_1^*(w)\overline{\beta}^*(w):]\\
&=-\left(\a_0(z)\delta(z-w)+\partial_w\delta(z-w)\right).
\end{align*}
It is easy to see that $[X(\alpha_0, z), X(-\alpha_1,
w)]=[X(\alpha_1, z), X(-\alpha_0, w)]=0$. Moreover one has

\begin{align*}
&[X(\a_n, z), X(-\a_n, w)]=[:\vep_n(z)\vep_{\overline n}^*(z):,:\vep_n^*(w)\vep_{\overline n}(w)::]\\
&-(:\vep_{\overline n}^*(z)\vep_{\overline n}(z):+ :\vep_n(z)\vep_n^*(z))\delta(z-w)-\partial_w\delta(z-w)\\
&=-\left(\a_n(z)\delta(z-w)+\partial_w\delta(z-w)\right).
\end{align*}

As for the Serre relations, we first notice that it is easy to
check that the OPE expansions of $X(\a_i, z)X(\a_i,
w)$ or $X(-\a_i, z)X(-\a_i, w)$ are analytic for all $i=0, \ldots,
n$, thus $[X(\pm\a_i, z), X(\pm\a_i, w)]=0$.

For $i=j\pm 1$, we have:
\begin{align*}
&[X(\a_i, z_1), [X(\a_i, z_2), X(\a_j, w)]]\\
&=[X(\a_i, z_1),
\left(X(\a_i+\a_j,w)\delta_{i+1,j}-X(\a_j+\a_i,w)\delta_{j+1,
i}\right)\delta(z_2-w)\\
&\qquad\qquad +\delta_{i, j+1}\delta_{i+1,
j}\partial_w\delta(z_2-w)]\\
&=(X(2\a_i+\a_j, w)\delta_{i+1,
i}\delta_{i,j+1}-X(\a_j+2\a_i, w)\delta_{j+1,i}\delta_{j,
i+1})\delta(z_1-w)\\
&\qquad\qquad \cdot\delta(z_2-w)\\
&=0.
\end{align*}

Thus we we have shown most cases and we include
the verification for type $B$ or $D$ to show the
method for the other vertices.

\begin{align*}
&[X(\a_0, z_1), [X(\a_0, z_2), X(\a_2, w)]]\\
&=[X(\a_0, z_1),[:\beta^*(z_2)\vep_2^*(z_2):, :\vep_2(w)\vep_3^*(w):]\\
&=[:\beta^*(z_1)\vep_2^*(z_1):,:\beta^*(z_2)\vep_3^*(w):]\delta(z_2-w)=0.
\end{align*}

\begin{align*}
&[X(\a_{n-1}, z_1), [X(\a_{n-1}, z_2), X(\a_n, w)]]\\
&=[X(\a_{n-1}, z_1),[:\vep_{n-1}(z_2)\vep_n^*(z_2):, :\vep_n(w)e(w):]\\
&=[:\vep_{n-1}(z_1)\vep_n^*(z_1):,:\vep_{n-1}(z_2)e(z_2):]\delta(z_2-w)\\
&=0.
\end{align*}

\begin{align*}
&[X(\a_{n}, z_1), [X(\a_{n}, z_2),[X(\a_n, z_3), X(\a_{n-1}, w)]]]\\
&=[X(\a_{n}, z_1), [X(\a_{n}, z_2),[:\vep_{n}(z_3)e(z_3):, :\vep_{n-1}(w)\vep_n^*(w):]]]\\
&=[X(\a_{n}, z_1), [:\vep_n(z_2)e(z_2):,:e(z_3)\vep_{n-1}(z_3):]]\delta(z_3-w)\\
&=-[X(\a_{n}, z_1),
:\vep_n(z_2)\vep_{n-1}(z_2):]\delta(z_2-z_3)\delta(z_3-w)\\
&=-[:\vep_n(z_1)e(z_1):,
:\vep_n(z_2)\vep_{n-1}(z_2):]\delta(z_2-z_3)\delta(z_3-w)\\
&=0
\end{align*}
The remaining relations follow similarly.
\end{proof}

\bibliographystyle{amsalpha}

\end{document}